\def\co{\colon\thinspace}
\theoremstyle{plain}
    \newtheorem{thm}{Theorem}[section]
    \newtheorem{lem}[thm]   {Lemma}
\theoremstyle{definition}
    \newtheorem{defn}[thm]  {Definition}
    \newtheorem{qns}[thm] {Questions}
    \newtheorem{rem}[thm]{Remark}
\begin{document}

\title{Homologies are infinitely complex}

\author{Mark Grant}
\author{Andr\'{a}s Sz\H{u}cs}

\address{School of Mathematics \& Statistics,
Herschel Building,
Newcastle University,
Newcastle upon Tyne NE1 7RU,
UK}

\email{mark.grant@newcastle.ac.uk}

\address{E\" otv\" os Lor\' and University,
P\' azm\' any P\' eter s\' etany 1/C,
 3-206, 1117 Budapest,
Hungary}

\email{szucs@math.elte.hu}

\subjclass[2010]{Primary 57R95; Secondary 57R19, 57R45, 55N10.}

\keywords{Homology of manifolds, realizing homology classes, Pontryagin--Thom construction for stratified sets, double-point co-oriented maps.}

\begin{abstract} We show that for any $k>1$, stratified sets of finite complexity are insufficient to realize all homology classes of codimension $k$ in all smooth manifolds. We also prove a similar result concerning smooth generic maps whose double-point sets are co-oriented. \end{abstract}

\thanks{The second author was supported in part by Grant OTKA NK 81203.}

\maketitle

\section{Introduction}

Realizing homology classes by manifolds is a classical problem in Algebraic Topology.

\begin{enumerate}
\item It is well known that any $\mathbb{Z}_2$-homology class of any space $X$ can be realized by a manifold in the sense that for any $x \in H_{m}(X; \mathbb{Z}_2)$ there is an $m$-dimensional closed, smooth manifold and a map $f\co M^m \to X$ such that $x = f_*[M]$ (see \cite{4}).

\item When $X$ itself is a manifold, then one can ask, how nice the map $f$ can be chosen.
Thom gave a necessary and sufficient condition to have an embedding realizing the class~$x$.
In particular, if $\dim X - \dim x = 1$, then the homology class $x$ can be realized by a smooth embedding.

We shall say that a cohomology class $\alpha \in H^k(P^p; \mathbb{Z}_2)$ of a smooth manifold $P^p$ is realized by a map $f\co\ M^m \to P^p$ if $\alpha$ is Poincar\'e dual to $f_*[M^m]$ $(p = m + k)$.

\item In \cite{1} it was shown that for any $k > 1$ there are $k$-dimensional cohomology classes of some sufficiently high-dimensional manifolds which cannot be realized by immersions.

\item In \cite{1} we have shown also that if we allow only a finite set of multisingularities, then not all cohomology classes can be realized.
\end{enumerate}

In the present paper we show that stratified sets of finite complexity are never sufficient to realize all homology classes of codimension $k > 1$ (Theorem \ref{A}).
The proof is almost identical to that of Theorem 1.3 in \cite{1}.

  Further we show that smooth maps with any types of local singularities are not sufficient to realize all homology classes if we require the set of (regular) double points to be co-oriented (Theorem \ref{B}).

\section{Stratified sets of finite complexity}

\begin{defn}\label{2.1}
Let $\theta$ be a finite set with elements $\eta_1, \dots, \eta_r$, where each $\eta_i$ is a triple: $\eta_i = (A_i, D^{c_i}, G_i)$. Here:

\begin{enumerate}
\item $c_i$ is a natural number and $D^{c_i}$ is the $c_i$-dimensional unit ball in $\mathbb R^{c_i}$.
\item $G_i$ is a compact subgroup of $O(c_i)$.
\item $A_i$ is a $G_i$-invariant subset of $D^{c_i}$.
We shall suppose that $A_i$ is a cone over a simplicial subcomplex of the sphere $S^{c_i - 1} = \partial D^{c_i}$.
\item We shall suppose also that there is a $k$ such that each $A_i$ has codimension $k$ in $D^{c_i}$ for $i = 1,\dots, r$.
\end{enumerate}
\end{defn}

\begin{defn}\label{2.2}
A subset $K$ of a smooth manifold $P^p$ is called a {\it $\theta$-subset} if the following hold:
\begin{enumerate}
\item For each $x \in K \subset P$ there are an $i$, $1 \leq i \leq r$, a neighbourhood $U$ of $x$, and a diffeomorphism $\varphi_U$ of $U$ onto the product $D^{c_i} \times D^{p - c_i}$ such that the image of $K \cap U$ is $A_i \times D^{p - c_i}$.
Such a point $x$ will be called a {\it point of type} $\eta_i$ in $K$.
We shall say also that such a point $x$ has {\it normal structure type} $\eta_i$.
\item For any given $i$ $(1 \leq i \leq r)$ the set of points of the type $\eta_i$ form a smooth submanifold $\eta_i(K)$ in $P$.
Let $T_i$ be the tubular neighbourhood of $\eta_i(K)$ in $P$.
\item The $D^{c_i}$-bundle $T_i \to \eta_i(K)$ admits $G_i$ as a structure group.
More precisely the maps $\varphi_U$ define a $G_i$-structure on it in the sense that for any $x \in \eta_i(K)$ and $U$, $V$ neighbourhoods as in 1) with maps $\varphi_U$ and $\varphi_V$ the map
$\varphi_V \circ \varphi_U^{-1}$ restricted to any ball $D^{c_i} \times q$ with $q \in D^{p - c_i}$ belongs to $G_i$ (if it is defined).
\end{enumerate}
\end{defn}

\begin{rem}
Any $\theta$-subset in a smooth manifold is a codimension $k$ stratified subset
(by (4) in Definition \ref{2.1}),
hence the highest dimensional stratum has codimension~$k$.
If the second highest dimensional stratum has codimension at least $k + 2$, then a $\theta$-subset is a cycle and so it represents a codimension $k$ homology class of the ambient smooth manifold.
We shall say also that the $\theta$-subset represents the dual $k$-dimensional cohomology class.
\end{rem}

\begin{thm} \label{A}
Let $k$ be greater than $1$ and let $\theta$ be a finite set (of normal structures) as above, so that $\theta$-subsets in smooth manifolds represent $k$-dimensional cohomology classes.
Then there is a smooth manifold $P$ (of sufficiently high dimension) and a $k$-dimensional cohomology class of $P$ that can not be represented by a $\theta$-subset.
\end{thm}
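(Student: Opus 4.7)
The plan is to imitate the proof of Theorem~1.3 in \cite{1}: build a classifying space $X_\theta$ whose mod~$2$ cohomology grows polynomially in the degree, then compare against the super-polynomial growth of $H^*(K(\mathbb{Z}_2, k);\mathbb{Z}_2)$ for $k \geq 2$ to obstruct representability of the fundamental class.

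First I would carry out a Pontryagin--Thom type construction for $\theta$-subsets. For each normal structure $\eta_i = (A_i, D^{c_i}, G_i)$, one forms the associated bundle pair $(A_i, D^{c_i}) \times_{G_i} EG_i$ over $BG_i$; the various pieces are then glued according to the incidences of strata permitted by Definition~\ref{2.2}. The result is a space $X_\theta$ containing a ``universal $\theta$-subset'' $\Xi$ dual to a class $\xi \in H^k(X_\theta; \mathbb{Z}_2)$, with the property that any $\theta$-subset $K$ in a smooth manifold $P$ arises as the transverse preimage of $\Xi$ under a classifying map $f \colon P \to X_\theta$, and represents the cohomology class $f^*(\xi)$.

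Next I would estimate the growth of $H^*(X_\theta; \mathbb{Z}_2)$. Since each $G_i$ is a compact Lie subgroup of $O(c_i)$, Venkov's theorem implies that $H^*(BG_i; \mathbb{Z}_2)$ is a finitely generated $\mathbb{Z}_2$-algebra, whence $\dim_{\mathbb{Z}_2} H^n(BG_i; \mathbb{Z}_2) = O(n^{d_i})$. As $X_\theta$ is assembled from finitely many such pieces via Thom-space and cofibration constructions, one deduces $\dim_{\mathbb{Z}_2} H^n(X_\theta; \mathbb{Z}_2) = O(n^d)$ for some $d = d(\theta)$.

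For the contradiction, take $P$ to be a smooth manifold obtained by embedding a sufficiently high-dimensional skeleton of $K(\mathbb{Z}_2, k)$ into Euclidean space and passing to a regular neighbourhood, and let $\iota_k \in H^k(P; \mathbb{Z}_2)$ be the pullback of the fundamental class. If $\iota_k$ were realized by a $\theta$-subset, there would exist $f \colon P \to X_\theta$ with $f^*(\xi) = \iota_k$. By Serre's theorem, $H^*(K(\mathbb{Z}_2, k); \mathbb{Z}_2)$ is generated as a graded-commutative $\mathbb{Z}_2$-algebra by admissible Steenrod operations on $\iota_k$; since $f^*$ respects both cup products and Steenrod squares, it would surject onto $H^*(P; \mathbb{Z}_2)$ in any fixed range of degrees. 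But for $k \geq 2$ the Poincar\'e series of $K(\mathbb{Z}_2, k)$ grows faster than any polynomial, so choosing $P$ to realize $K(\mathbb{Z}_2, k)$ through a sufficiently large degree $N$ forces $\dim H^N(P; \mathbb{Z}_2) > \dim H^N(X_\theta; \mathbb{Z}_2)$, giving the desired contradiction.

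The hard part will be setting up the classifying space $X_\theta$ rigorously --- in particular, gluing the local models $(A_i, D^{c_i}) \times_{G_i} EG_i$ along stratum incidences so that homotopy classes of maps $P \to X_\theta$ really correspond to cobordism classes of $\theta$-subsets. The authors' remark that ``the proof is almost identical to that of Theorem~1.3 in \cite{1}'' suggests this machinery was already developed there and needs only a mild extension to the present setting.
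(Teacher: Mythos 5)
Your proposal is correct and follows essentially the same route as the paper: construct the classifying space $X_\theta$ by a Pontryagin--Thom argument, bound the growth of $H^*(X_\theta;\mathbb{Z}_2)$ using the filtration whose subquotients are Thom spaces over the spaces $BG_i$ together with Venkov's theorem, and contradict the super-polynomial growth of $H^*(K(\mathbb{Z}_2,k);\mathbb{Z}_2)$ guaranteed by Serre. The only cosmetic difference is that you deduce surjectivity of the classifying map $f^*$ on a manifold approximation $P$ of a skeleton of $K(\mathbb{Z}_2,k)$ from closure of $\operatorname{im}(f^*)$ under cup products and Steenrod squares, whereas the paper lifts the identity of $K(\mathbb{Z}_2,k)$ to $X_\theta$ and reads off injectivity of $U^*$; both yield the same dimension inequality and the same contradiction.
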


In other words, stratified subsets of codimension $k$ with finitely many allowed normal structure types are never enough to realize all $k$-dimensional cohomology classes.

\section{Preliminaries for the proof: The Pontryagin--Thom construction for $\theta$-subsets}

\begin{defn}
Two $\theta$-subsets $K_0$ and $K_1$ in a manifold $P$ are said to be {\it cobordant} if there is a $\theta$-subset $W$ in $P \times I$ such that $W \cap (P \times \{\varepsilon\})$ is $K_0 \times \{\varepsilon\}$ if $0 \leq \varepsilon \leq \frac13$ and it is $K_1 \times \{\varepsilon\}$ if $\frac23 \leq \varepsilon \leq 1$.
The set of $\theta$-cobordism classes in $P$ will be denoted by $\theta(P)$.
\end{defn}

\begin{rem}
If $f \co N \to P$ is any continuous map of the smooth manifold $N$ into the smooth manifold $P$, then it induces a map $f^*\co\ \theta(P) \to \theta(N)$ in the following way:

Given a $\theta$-subset $K$ in $P$ make the map $f$ transverse to each stratum of $K$ and take the inverse images of these strata.
The tubular neighbourhoods in $N$ of the preimages of the strata of $K$ will have $G_i$-structures, hence in these neighbourhoods the normal structures $\eta_i$ arise, $i = 1,2, \dots, r$.
\end{rem}

\begin{rem}
If $P$ has the form $P'\times \mathbb R^1$ for some manifold $P'$, then $\theta(P)$ will be a group.
(The inverse element is obtained by applying the map $(q,t) \longmapsto (q, -t)$ for $q \in P'$, $t \in \mathbb R^1$.)

If the source manifold $N$ also has the form $N'\times \mathbb R^1$ and $f$ has the form $f'\times \text{\rm id}_{\mathbb R^1}$ where $f'$ is a map $N' \to P'$, then $f^*$ is a homomorphism.
\end{rem}

Notation:
\begin{enumerate}
\item If $Y$ is any space, then we shall denote by $Y_{\infty}$ its one-point
compactification. In particular if $Y$ is compact, then $Y_{\infty}$ is the
disjoint union of $Y$ with a point. So $Y_{\infty}$ is a pointed space (i.e.
a space with a marked point).

\item If $V$ is any pointed space, then $[Y,V]$ will denote the set of homotopy
classes of pointed maps $Y_{\infty} \to V$ (i.e. continuous maps mapping the marked
point to the marked point, and the homotopies must go through pointed maps.)
\end{enumerate}

\begin{thm}\label{PT}
There is a Pontryagin--Thom construction for cobordisms of $\theta$-subsets.
That is, there is a classifying space $X_\theta$ such that $\theta(P)$ can be
canonically identified with the set of homotopy classes $[P, X_\theta]$ for
any closed manifold $P$. (Note, that $X_\theta$ is a pointed space, we shall
see that below.)
Moreover, for any map $f\co\ N \to P$ the induced map $f^*\co \ \theta(P) \to \theta(N)$ will be identified with the map
$[P ,  X_\theta] \to [N , X_\theta]$, induced by taking the precomposition
with the map $f_\infty\co N_{\infty} \to P_{\infty}$ (induced by $f$),
$[\alpha] \longmapsto [\alpha \circ f_{\infty}]$, where $\alpha$ is a pointed map $\alpha \co \ P_{\infty} \to X_\theta$.
\end{thm}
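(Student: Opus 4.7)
The strategy is to imitate Thom's classical construction of $MO_k$, adapted to accommodate the finitely many normal structure types in $\theta$. The first step is to construct $X_\theta$. For each $\eta_i = (A_i, D^{c_i}, G_i)$, form the universal $G_i$-disk bundle over $BG_i$, with total space $V_i = EG_i \times_{G_i} D^{c_i}$; inside $V_i$ sits the universal $A_i$-subspace $B_i = EG_i \times_{G_i} A_i$. Since every point of a $\theta$-subset must itself have some normal structure $\eta_j$, the open strata of each $A_i$ are locally modelled on lower-dimensional $\eta_j$'s, which supplies gluing data identifying open neighbourhoods in $V_i$ with open neighbourhoods in $V_j$. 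Let $X_\theta$ be the quotient of $\bigsqcup_i V_i$ by these identifications, with the portion of each sphere bundle $EG_i \times_{G_i} S^{c_i-1}$ not involved in any gluing collapsed to a common basepoint; denote the ``universal $\theta$-subset'' by $B_\theta = \bigcup_i B_i \subset X_\theta$.

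The Pontryagin--Thom map $\Theta \co \theta(P) \to [P, X_\theta]$ is defined as follows. Given a $\theta$-subset $K \subset P$, take a stratified tubular neighbourhood $\nu(K) \subset P$, which decomposes as a union of the bundles $T_i$ of Definition \ref{2.2}. Each classifying map $\eta_i(K) \to BG_i$ is covered by a fibrewise bundle map $T_i \to V_i$, and these assemble, by compatibility of the $\theta$-structure along the strata, to a stratum-preserving map $\nu(K) \to X_\theta$ sending $\partial \nu(K)$ to the basepoint. Extending by the basepoint on $P \setminus \mathrm{int}\,\nu(K)$ yields the desired pointed map $P_\infty \to X_\theta$; independence of the auxiliary choices (of tubular neighbourhood, classifying maps, etc.) is a routine uniqueness argument.

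The inverse is constructed via transversality. After replacing each $BG_i$ by a finite-dimensional smooth approximation, any pointed map $\alpha \co P_\infty \to X_\theta$ factors through a finite-dimensional stage and can be perturbed, by an inductive stratum-by-stratum argument, to be transverse to every stratum of $B_\theta$; the preimage $\alpha^{-1}(B_\theta) \subset P$ then inherits a canonical $\theta$-subset structure. That the two constructions descend to homotopy classes and are mutually inverse follows by applying the same arguments to the one-parameter family $P \times I$: cobordisms of $\theta$-subsets in $P \times I$ correspond precisely to pointed homotopies of maps from $P_\infty$. Naturality under $f \co N \to P$ is then immediate, since making $f$ transverse to the strata of $K$ corresponds on the classifying side to precomposing with $f_\infty$.

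The main technical obstacle is the inductive transversality argument for maps into the stratified (and, before truncation, infinite-dimensional) space $X_\theta$. One must arrange the transverse perturbations at different strata compatibly, so that they glue to a single well-defined map into $X_\theta$; the stratification of $X_\theta$ by codimension permits such an induction, but the bookkeeping along the gluings where strata of different types meet requires care.
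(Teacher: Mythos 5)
Your proposal is correct in outline but diverges from the paper's construction in a way worth noting. The paper builds $X_\theta$ by induction on the number of normal-structure types: order the $\eta_i$ by the dimension of the corresponding stratum, let $\theta' = \theta \setminus \{\eta_r\}$ with classifying space $X'$ (by inductive hypothesis), observe that the sphere bundle $\partial D(\xi_r)$ of the universal $c_r$-disk bundle $\xi_r = EG_r \times_{G_r} \mathbb{R}^{c_r}$ contains the $\theta'$-subset $EG_r \times_{G_r}(A_r \cap S^{c_r-1})$, and hence carries a classifying map $\varrho \co \partial D(\xi_r) \to X'$, again by the inductive hypothesis; then set $X_\theta = X' \cup_\varrho D(\xi_r)$. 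The attaching map is itself a Pontryagin--Thom collapse, not a local identification. Your construction instead takes the disjoint union $\bigsqcup_i V_i$ and quotients by gluings supplied by the local normal structures of the strata of each $A_i$; this is a direct (non-recursive) presentation of what should be the same space. The cost is that your gluing data is only given locally (a neighbourhood of a non-cone point of $A_i$ is modelled on some $\eta_j$), and you do not address why these local identifications cohere globally to a well-defined quotient; in the paper's inductive scheme that coherence is automatic, since the entire attaching map at each stage is produced at once by the already-established bijection $\theta'(\partial D(\xi_r)_{\mathrm{finite}}) \cong [\partial D(\xi_r)_{\mathrm{finite}}, X']$. The inductive construction also hands you for free the filtration $X_0 \subset X_1 \subset \cdots \subset X_r = X_\theta$ by cofibrations with cofibres the Thom spaces $\mathrm{Th}(\xi_i)$, which is exactly what Section 4's Lemma uses; in your presentation this filtration is present but would need to be extracted. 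Your Pontryagin--Thom bijection argument (collapse a stratified tubular neighbourhood for one direction, stratumwise transversality for the other, and run the same argument over $P\times I$ for well-definedness and invertibility) matches what the paper implicitly invokes by citing the Rim\'anyi--Sz\H ucs construction for singular maps; neither you nor the paper spells these details out fully, and your identification of the transversality bookkeeping as the main technical burden is apt.
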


The construction of $X_\theta$ goes by induction.
The starting of the induction:
When $\theta$ consists of a single element $\eta_1$, then necessarily $A_1$ is the centre of $D^{c_1}$, and $\theta$-subsets are simply embedded submanifolds of codimension $k$ with normal bundle reduced to the structure group $G_1$.
Let $MG_1$ be the Thom space of the corresponding universal vector bundle.
Then $X_\theta = MG_1$ by the classical Pontryagin-Thom construction.
(The Thom space $MG_1$ is a pointed space, and its marked point will be the
marked point of $X_{\theta}$, for any $\theta$. From the induction step it will
  be clear, that $MG_1$ is a subspace of $X_{\theta}$ for any $\theta$.)

The induction step:
We can suppose that the elements $\eta_1, \dots, \eta_r$ of the set $\theta$ are enumerated in such a way that
$\dim \eta_i(K) \leq \dim \eta_{i + 1}(K)$ for any $\theta$-subset~$K$ and $i = 1,2, \dots, r - 1$.

Let $\theta'$ be the subset of $\theta$ obtained by omitting $\eta_r$.
We shall denote $\eta_r = (A_r, D^{c_r}, G_r)$ simply by $\eta = (A, D^c, G)$.
Let $X'$ be the classifying space for $\theta'$-subsets.
Let us denote by $\xi$ the bundle $EG {}_{G}{\times} \mathbb{R}^c$, where $EG$ is a contractible, free $G$-space.
Note that on the boundary of $D(\xi)$
(the associated ball-bundle) a $\theta'$-subset arises:
$K' = EG \times_G (A \cap S^{c - 1})$.
(More precisely for any finite dimensional manifold-approximation $(D\xi)_{\text{finite}}$ of $D\xi$ the set $\partial (D\xi)_{\text{finite}}\cap K'$ is a $\theta'$-subset of $\partial (D\xi)_{\text{finite}}$.)
Hence by the hypothesis of the induction a map
$\varrho \co  \partial D(\xi) \to X'$ arises.
Now we define $X_\theta$ as follows: $X_\theta = X' \bigcup_\varrho D(\xi)$
(i.e. attach $D(\xi)$ to $X'$ using the map $\varrho$).
It is standard to show that $\theta(P) = [P, X_\theta]$. (See the analogous result for singular maps in \cite{2}.)

\section{Proof of Theorem \ref{A}}

Let $U$ be the Thom class of the Thom space $MG_1$.
By construction the space $MG_1$ is a subset of $X_\theta$, and the inclusion
$j \co \ MG_1 \subset X_\theta$ induces an isomorphism of the $k$-th cohomology
groups: $j^*\co\ H^k(X_\theta; \mathbb{Z}_2) \widetilde{\longrightarrow} H^k
(MG_1, \mathbb{Z}_2)$ ($X_\theta$ is obtained from $MG_1$ by successive cofibrations,
the cofibers of which are the Thom spaces of bundles of dimensions at least $(k + 2)$, hence the $k$-th cohomology group does not change).
We shall denote also by $U$ the element $(j^*)^{-1}(U)$ in $H^k(X_\theta; \mathbb{Z}_2)$.
Considering $U$ as a map $U\co X_\theta \to K(\mathbb{Z}_2, k)$, its composition with a map $f\co\ P \to X_\theta$ induces the map $\theta(P) \to H^k(P; \mathbb{Z}_2)$ that associates to (the cobordism class of) a $\theta$-subset the represented $k$-dimensional cohomology class.

If a $k$-dimensional cohomology class $y\co\ P \to K(\mathbb{Z}_2, k)$ can be represented by a $\theta$-subset, then there is a lift $\widetilde y \co\ P \to X_\theta$ such that $U\circ \widetilde y = y$.

If any $k$-dimensional cohomology class of any manifold $P$ can be represented by a $\theta$-subset, then such a lift $\widetilde y$ exists for any $P$ and any map $y\co\ P \to K(\mathbb{Z}_2, k)$, so the identity map $\text{\rm id}\co\ K(\mathbb{Z}_2, k) \to K(\mathbb{Z}_2, k)$ also has a lift $\widetilde{\text{\rm id}}\co\ K(\mathbb{Z}_2, k) \to X_\theta$ such that $U \circ \widetilde{\text{\rm id}} = \text{\rm id}$.
Hence $U^* \co\ H^*\bigl(K(\mathbb{Z}_2, k); \mathbb{Z}_2\bigr) \to H^*(X_\theta; \mathbb{Z}_2)$ is an injection.

On the other hand, by a classical result of Serre \cite{3} the ring $H^*\bigl(K(\mathbb{Z}_2, k); \mathbb{Z}_2\bigr)$ is a polynomial algebra with infinitely many variables.
It is well known that the dimension of the degree $n$ part of such an algebra grows faster (when $n \to \infty$) than the degree $n$ part of any polynomial algebra with finitely many variables (or of any finite sum of such algebras).

Hence Theorem \ref{A} will be proved if we show the following

\begin{lem}
The sequence $\dim H^n(X_\theta; \mathbb{Z}_2)$ grows not faster (when $n \to \infty$) than the dimension of the degree $n$ part of a finite sum
of finitely generated polynomial algebras.
\end{lem}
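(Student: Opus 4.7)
My plan is to unwind the inductive construction of $X_\theta$ and combine it with classical finiteness results for the mod-$2$ cohomology of classifying spaces of compact Lie groups. Writing $X_0 = \ast$ and letting $X_i$ denote the classifying space for $\{\eta_1,\dots,\eta_i\}$-subsets, the construction in Section~3 exhibits each inclusion $X_{i-1} \hookrightarrow X_i$ as a cofibration whose cofiber $D(\xi_i)/\partial D(\xi_i) = MG_i$ is the Thom space of the universal $c_i$-dimensional $G_i$-vector bundle. The long exact cohomology sequence gives $\dim H^n(X_i;\mathbb{Z}_2) \leq \dim H^n(X_{i-1};\mathbb{Z}_2) + \dim H^n(MG_i;\mathbb{Z}_2)$, and induction yields
\[
\dim H^n(X_\theta;\mathbb{Z}_2) \leq \sum_{i=1}^r \dim H^n(MG_i;\mathbb{Z}_2).
\]

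The mod-$2$ Thom isomorphism rewrites each summand as $\dim H^{n-c_i}(BG_i;\mathbb{Z}_2)$, so it remains to bound the growth of $\dim H^n(BG_i;\mathbb{Z}_2)$ by that of (a finite sum of shifted copies of) a single finitely generated polynomial algebra. Here the hypothesis $G_i \subset O(c_i)$ is essential: by a theorem of Venkov, $H^*(BG_i;\mathbb{Z}_2)$ is finitely generated as a module over the polynomial ring $H^*(BO(c_i);\mathbb{Z}_2) = \mathbb{Z}_2[w_1,\dots,w_{c_i}]$, so its degree-$n$ dimension is bounded by a finite sum of shifted dimensions of this polynomial ring. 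Absorbing the shifts and summing over $i$ yields the required bound.

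The main obstacle is the appeal to Venkov's theorem; to avoid it, one can instead run the Leray--Serre spectral sequence of the fibration $O(c_i)/G_i \to BG_i \to BO(c_i)$. Since the fiber is a compact manifold with $H^*(O(c_i)/G_i;\mathbb{Z}_2)$ finite-dimensional, the $E_2$-page is a finitely generated module over the Noetherian ring $\mathbb{Z}_2[w_1,\dots,w_{c_i}]$, and this finite generation passes to $E_\infty$ and hence to $H^*(BG_i;\mathbb{Z}_2)$. All remaining steps are routine homological bookkeeping.
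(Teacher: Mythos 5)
Your proposal is correct and follows the same route as the paper: the filtration $X_0 \subset X_1 \subset \dots \subset X_r = X_\theta$ with cofibers the Thom spaces $MG_i$, the resulting bound $\dim H^n(X_\theta;\mathbb{Z}_2)\le\sum_i\dim H^{n-c_i}(BG_i;\mathbb{Z}_2)$, and Venkov's finiteness theorem for $H^*(BG_i;\mathbb{Z}_2)$. Your spectral-sequence sketch offered as a way to avoid citing Venkov is in fact essentially Venkov's own proof.
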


\begin{proof}
By construction $X_\theta$ has a filtration:
$$
X_0 \subset X_1 \subset \dots \subset X_r = X_\theta,
$$
where $X_0$ is a point and $X_i / X_{i - 1}$ is the Thom space of a vector bundle (of $\dim c_i$) over $BG_i$.
Hence
$$
\dim H^n(X_\theta; \mathbb{Z}_2) \leq \sum_{i = 1}^r \dim H^{n - c_i} (BG_i; \mathbb{Z}_2).
$$
By a result of Venkov \cite{5}, for any compact Lie group $G$ the sequence $\dim H^n(BG; \mathbb{Z}_2)$ grows not faster than the dimension of the degree $n$ part of a polynomial algebra with finitely many variables.
\end{proof}

This concludes the proof of Theorem \ref{A}. \qed

\medskip
It is natural to pose the following questions (We do not know the answers):

\begin{qns}
\begin{enumerate}
\item Is it possible to realize any homology class of any smooth manifold by maps having {\it local} singularities from a given finite set?
(For example: Is it possible to realize any homology class in smooth manifolds by fold maps?)

\item Is it possible to realize all homology classes by corank $i$ maps for a fixed $i$?
\end{enumerate}
\end{qns}

\section{Maps with arbitrary local singularities and co-oriented double points}

 Here we show that not all homology classes in smooth manifolds can be realized by smooth maps even if we allow any local singularities but require the second top dimensional stratum to be co-oriented.

\begin{defn}
Let a smooth generic map $f\co\ N^n \to P^p$ be called {\it double-points co-oriented}, or a {\it dc} map for short, if the push-forward of the virtual normal bundle $f_!(\nu_f)$ becomes oriented when we restrict it to the set of the image of double points.
\end{defn}

\begin{rem}\label{5.2}
Genericity of the map will imply that the set of points of $P$ having exactly two non-singular preimage points form a $(p - 2k)$-dimensional chain
 (when properly triangulated) with boundary equal to the image of the fold-singular points.
\end{rem}

\begin{thm} \label{B}
For any even $k$ there is a cohomology class of dimension $k$ in a smooth manifold of sufficiently high dimension which can not be realized by any dc-map.
\end{thm}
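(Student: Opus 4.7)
The plan is to mimic the proof of Theorem \ref{A}, but with a classifying space tailored to the dc-condition. First, I would construct a pointed space $Y = Y^{dc}$ such that $[P_{\infty}, Y]$ is in natural bijection with dc-cobordism classes in $P$, via the Pontryagin--Thom construction for singular maps (as in \cite{2}). The space $Y$ is built by a filtration along strata of the image: the open top stratum is the immersion part, contributing $MO(k)$; the double-point stratum contributes the Thom space of a $2k$-dimensional \emph{oriented} vector bundle $\bar\xi$ over $B\tilde\Gamma$, where $\tilde\Gamma \subset \mathbb{Z}/2 \wr O(k)$ is the index-2 subgroup of triples $(A,B,\varepsilon)$ with $\det A \cdot \det B = 1$ (this is the orientation-preserving reduction of the wreath-swap structure on $\mathbb{R}^{2k}$, which makes sense precisely when $k$ is even, since then the swap itself acts with determinant $(-1)^k = +1$); further strata corresponding to triple points and arbitrary local singularity types are attached without additional orientation restrictions. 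The construction delivers a universal Thom class $U \in H^k(Y; \mathbb{Z}_2)$ such that any classifying map $P_{\infty} \to Y$ pulls $U$ back to the cohomology class represented by the given dc-map.

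Next, exactly as in the proof of Theorem \ref{A}: if every $k$-dimensional $\mathbb{Z}_2$-cohomology class of every smooth manifold could be realized by a dc-map, then the identity of $K(\mathbb{Z}_2, k)$ would admit a lift $\widetilde{\mathrm{id}} \co K(\mathbb{Z}_2, k) \to Y$ with $U \circ \widetilde{\mathrm{id}} = \mathrm{id}$, and consequently $U^* \co H^*(K(\mathbb{Z}_2, k); \mathbb{Z}_2) \to H^*(Y; \mathbb{Z}_2)$ would be injective.

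To obtain a contradiction I would extract an algebraic consequence of the dc-reduction at the double-point stratum which fails in $H^*(K(\mathbb{Z}_2, k); \mathbb{Z}_2)$ when $k$ is even. Since $\bar\xi$ on $B\tilde\Gamma$ is oriented, its Thom class $T \in H^{2k}(Y; \mathbb{Z}_2)$ satisfies $Sq^i T = w_i(\bar\xi)\cdot T$ with $w_1(\bar\xi) = 0$; combining this with Herbert's self-intersection formula, which in the universal setting expresses $T$ in terms of $U^2$ plus correction terms concentrated in the higher-codimension strata, yields a specific Steenrod/characteristic-class identity on $U$ in $H^*(Y; \mathbb{Z}_2)$. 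Pulling back via $U^*$ turns this into a relation on $\iota_k$, which one checks, using Serre's description of $H^*(K(\mathbb{Z}_2, k); \mathbb{Z}_2)$ as a polynomial algebra on admissible Steenrod monomials together with the Adem relation $Sq^1 Sq^k = (k-1)\,Sq^{k+1}$ (non-trivial for $k$ even, vanishing for $k$ odd), cannot hold for the fundamental class $\iota_k$ when $k$ is even.

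The principal obstacle is the explicit extraction and verification of this algebraic relation. While the dc-condition has a clean geometric meaning (orientability of the normal bundle of the double-point chain, extending across the fold boundary), translating it into an identity on $U$ via Herbert's formula requires careful tracking of the correction terms coming from lower strata, and then showing that the resulting relation is non-trivial on $\iota_k$ only when $k$ is even requires a Steenrod/Adem computation in the cohomology of the Eilenberg--MacLane spectrum. The parity hypothesis enters at two distinct levels: in the very definition of the reduction $\tilde\Gamma \subset \mathbb{Z}/2 \wr O(k)$ (which requires the swap to preserve orientation of $\mathbb{R}^{2k}$), and in the Steenrod-algebra computation that produces the contradictory identity.
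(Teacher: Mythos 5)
Your proposal takes a genuinely different route from the paper, but it contains a gap that is not filled: the paper's argument is short, concrete and entirely free of classifying-space machinery for this particular theorem, whereas yours hinges on an algebraic relation you acknowledge you have not extracted.

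The paper's proof identifies a specific, checkable obstruction. By \cite[Proposition~4.2]{1}, for a generic map $f\co M^m\to P^p$ realizing $\alpha\in H^k(P;\mathbb{Z}_2)$ the integral Bockstein $\beta(\alpha^2)\in H^{2k+1}(P;\mathbb{Z})$ is Poincar\'e dual to $f_*[\Sigma(f)]$, the image of the singular set. For a dc-map, Remark~\ref{5.2} says the image of $\Sigma(f)$ bounds the chain of regular double points, which is an integral chain precisely because the dc-condition co-orients that stratum; hence $f_*[\Sigma(f)]=0$ and so $\beta(\alpha^2)=0$. Finally, \cite[Theorem~1.1]{1} produces a manifold and a class $\alpha$ with $\beta(\alpha^2)\neq 0$. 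The evenness of $k$ is needed because for $k$ odd one has $\alpha^2=Sq^k\alpha=Sq^1Sq^{k-1}\alpha=\rho\beta(Sq^{k-1}\alpha)$ (with $\rho$ mod-$2$ reduction), whence $\beta(\alpha^2)=\beta\rho\beta(\cdots)=0$ identically; the obstruction is only potentially nonvanishing for $k$ even. That is where the parity hypothesis actually enters -- not primarily through the structure group of the double-point stratum.

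Your proposal instead tries to replay the strategy of Theorem~\ref{A}: build a classifying space $Y$ for dc-cobordism, produce a hypothetical lift $\widetilde{\mathrm{id}}\co K(\mathbb{Z}_2,k)\to Y$, and exhibit a universal relation in $H^*(Y;\mathbb{Z}_2)$ involving $U$ that fails for $\iota_k$. The formal setup (the space $Y$, the lift, pulling back a relation) is sound, and the observation that the swap on $\mathbb{R}^k\oplus\mathbb{R}^k$ preserves orientation iff $k$ is even is a correct geometric remark about the double-point normal structure. But the crux of the matter -- deriving the relation from $w_1(\bar\xi)=0$ via a universal Herbert-type formula and checking it fails in $H^*(K(\mathbb{Z}_2,k);\mathbb{Z}_2)$ -- is left as a hope, and the specific candidate you gesture at is unlikely to work as stated: $Sq^1(\iota_k^2)=0$ always by the Cartan formula, and $Sq^1Sq^k\iota_k=(k-1)Sq^{k+1}\iota_k=0$ by instability regardless of parity, so no purely mod-$2$ Steenrod identity on $\iota_k$ alone can do the job. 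The paper circumvents this by working with the integral class $\beta(\alpha^2)$, which can be nonzero even though its mod-$2$ reduction vanishes. Without replacing the vague ``Herbert's formula plus Adem'' step with the explicit integral Bockstein obstruction (or something equivalent), your argument does not close.

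Also worth noting: unlike in Theorem~\ref{A}, a classifying space for dc-maps does not come from a finite filtration (arbitrary local singularities are allowed), so the growth estimate of the Lemma in Section~4 is unavailable here. You correctly avoid that route, but this removes the scaffolding that made the Theorem~\ref{A} template effective, which is another reason the direct obstruction-theoretic argument of the paper is the right tool.
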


\begin{proof}
In \cite[Proposition 4.2]{1} it was shown that if $f\co\ M^m \to P^p$ is a generic smooth map, $p = m + k$, and $\alpha \in H^k(P; \mathbb{Z}_2)$ the cohomology class realized by $f$, then the class $\beta(\alpha^2)\in H^{2k+1} H(P; \mathbb{Z})$ (where $\beta$ is the Bockstein operator) is dual to the homology class $f_*\bigl[\Sigma(f)\bigr]$, where $\Sigma(f)$ is the set of singular points.

By Remark \ref{5.2}, for a dc map the class $\beta(\alpha^2)$ must be zero.
(The cycle $f\bigl(\Sigma(f)\bigr)$ is the boundary of the chain formed by the regular double points in the chain group $C_{p - 2k - 1}(P; \mathbb{Z})$.)

On the other hand, it was shown in \cite[Theorem 1.1]{1} that there is a manifold $P$ and a class $\alpha \in H^{k}(P; \mathbb{Z}_2)$ for which $\beta(\alpha^2)$ is not zero.
\end{proof}

\end{document}